\numberwithin{equation}{section}
                        \theoremstyle{plain}
\newtheorem{theorem}{Theorem}[section]
\newtheorem{lemma}[theorem]{Lemma}
\newtheorem{corollary}[theorem]{Corollary}
\newtheorem{proposition}[theorem]{Proposition}
\newtheorem{conjecture}[theorem]{Conjecture}
\theoremstyle{definition}
\newtheorem{remark}[theorem]{Remark}
\newtheorem{example}[theorem]{Example}
\newcommand{\ZZ}{{\mathbb{Z}}}
\newcommand{\QQ}{{\mathbb{Q}}}
\newcommand{\bdy}{{\partial}}
\newcommand{\K}{{K_{p, q}}}
\newcommand\no[1]{}
\newtheorem*{namedtheorem}{\theoremname}
\newcommand{\theoremname}{testing}
\def\BC{\mathbb C}
\def\BN{\mathbb N}
\def\BZ{\mathbb Z}
\def\CS{\mathcal S}
\def\ve{\varepsilon}
\def\be { \begin{equation} }
\def\ee { \end{equation} }
\begin{document}

\title[]{Knot Cabling and the Degree of the Colored Jones Polynomial II}

\author[]{Efstratia Kalfagianni}
\address{Department of Mathematics, Michigan State University, East Lansing, MI, 48824}
\email{kalfagia@math.msu.edu}

\author[]{Anh T. Tran}
\address{Department of Mathematical Sciences, University of Texas at Dallas,  Richardson TX, 75080}
\email{att140830@utdallas.edu}

\begin{abstract} We continue our study of the degree of the colored Jones polynomial 
under knot cabling started in \cite{Effie-Anh-slope}. Under certain hypothesis on this degree, we determine how the Jones slopes and the linear term behave under cabling. As an application we verify Garoufalidis' Slope Conjecture and a conjecture of 
\cite{Effie-Anh-slope} for cables of a two-parameter family of closed 3-braids called  2-fusion knots.

\bigskip

\bigskip

\bigskip

\noindent {2010 {\em Mathematics Classification:} {\rm Primary 57N10. Secondary 57M25.}\\

\noindent{\em Key words and phrases: {\rm  2-fusion knot, boundary slope, cable knot, colored Jones polynomial, Jones slope.}}}
\end{abstract}
\thanks {\today}
\thanks{E. K. was partially supported in part by NSF grants DMS--1105843 and DMS--1404754 }

\maketitle

\section{Introduction}

For a knot $K \subset S^3$, let $n(K)$ denote a tubular neighborhood of
$K$ and let $M_K:=\overline{ S^3\setminus n(K)}$ denote the exterior of
$K$. Let $\langle \mu, \lambda \rangle$ be the canonical
meridian--longitude basis of $H_1 (\bdy n(K))$.  An element $a/b \in
{\QQ}\cup \{ 1/0\}$ is called a \emph{boundary slope} of $K$ if there
is a properly embedded essential surface $(S, \bdy S) \subset (M_K,
\bdy n(K))$, such that  $\bdy S$ represents $a \mu + b \lambda \in
H_1 (\bdy n(K))$.  Hatcher showed that every knot $K \subset S^3$
has finitely many boundary slopes \cite{hatcher}.
We will use $bs_K$ to denote the set of boundary slopes of $K$.

For a Laurent polynomial $f(v) \in \BC[v^{\pm 1/4}]$, let $d_+[f]$  be the highest degree of $f$ in $v$. For a knot $K \subset S^3$, Garoufalidis \cite{Ga-quasi} proved that the highest degree of its colored Jones polynomial is a quadratic quasi-polynomial. This means that there exist an integer $N_K>0$ and periodic functions $a_{K}(n), b_{K}(n), c_{K}(n)$ such that 
$$d_+[J_K(n)] = a_{K}(n) \, n^2 + b_{K}(n) n + c_{K}(n)$$
for $n \ge N_K$.  The common period of $a_{K}(n), b_{K}(n) ,  c_{K}(n)$ is called the period of $K$.
Let $d_-[J_K(n)]$ denote the lowest degree of $J_K(n)$ in $v$.
For a sequence $\{x_n\}$, let $\{x_n\}'$ denote the set of its cluster points. Elements of the set
$$js_K:= \left\{ 4n^{-2}d_+[J_K(n)]  \right\}' \quad
 \mbox{and} \quad js^*_K:= \left\{ 4n^{-2}d_-[J_K(n)] \right\}' $$
 are called {\em Jones slopes} of $K$. 
It is known that
 $J_K(n, v) = J_{K^*}(n, v^{-1})$, where $K^*$ is the mirror image of $K$. Thus statements about the lowest degree 
$d_-[J_K(n)]$, which is equal to $- d_+[J_{K^*}(n)]$,  can be equivalently made. With this in mind we will mostly discuss the highest degree in this paper.

In \cite {Effie-Anh-slope}  we studied the behavior of $d_+[J_K(n)]$ under knot cabling for knots of period at most two.
In this paper we study knots with period greater than two.

To state our results, let $K$ be a knot with framing 0, and $p,q$ are co-prime integers.
The $(p,q)$-cable $K_{p,q}$ of $K$ is the $0$-framed satellite of $K$ with
pattern $(p,q)$ torus knot. It is known that $K_{-p,-q}=\text{r}K_{p,q}$, where $\text{r}K_{p,q}$ denotes $\K$ with the opposite orientation, and that the colored Jones polynomial of a \textit{knot} is independent of the orientation of the knot. Hence in the statements of results below, and throughout the paper,  we will assume that our cables are non-trivial in the  sense  that $q>1$.

 \begin{theorem}
 \label{thm-quasi-constant}
 Let $K$ be a knot such that for $n \gg 0$ we have
$$
d_+[J_K(n)]=an^2+b(n)n+d(n)
$$
where $a$ is a constant, $b(n)$ and $d(n)$ are periodic functions with $b(n) \le 0$. Let \begin{eqnarray*}
M_1 &=&  \max\{|b(i)-b(j)| \, : \,  i \equiv j \pmod{2}\},\\
M_2 &=& \max\{2b(i) + |b(i)-b(j)| + |d(i)-d(j)| \, : \, i \equiv j \pmod{2}\}.
\end{eqnarray*}

Suppose $p- (4a-M_1)q < 0$ or $p- (4a + M_1)q > \max\{0,M_2\}$. Then for $n \gg 0$ we have
$$d_+[J_{\K}(n)]=An^2+B(n)n+D(n)$$
where $A$ is a constant, and $B(n), D(n)$ are periodic functions with $B(n) \le 0$. Moreover, if $js_K \subset bs_K$ then $js_{\K} \subset bs_{\K}$.
 \end{theorem}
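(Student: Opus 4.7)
The plan is to feed the given quadratic quasi-polynomial expansion of $d_+[J_K(n)]$ into a cabling formula for the colored Jones polynomial from \cite{Effie-Anh-slope}, which expresses $J_{\K}(n)$ as a finite sum $\sum_k c_k(n) \, J_K(m_k(n))$ with $k$ ranging over a finite set of integers depending on $n$, the monomials $c_k(n)$ having explicit quadratic degree in $(k,n)$ involving $p$ and $q$, and $m_k(n)$ affine in $k$ and $n$. Applying the hypothesis summand-by-summand yields, for each $k$, a candidate top degree
$$\delta(k,n) := d_+\bigl[c_k(n)\bigr] + a\, m_k(n)^2 + b(m_k(n))\, m_k(n) + d(m_k(n)),$$
and one has $d_+[J_{\K}(n)] = \max_k \delta(k,n)$ for $n \gg 0$, provided no cancellation occurs among summands of top degree; this non-cancellation is the standard separation check.

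First I would expand $\delta(k,n)$ as a polynomial in $k$ with coefficients depending on $n$ and on the residue class of $m_k(n)$ modulo the common period of $b$ and $d$. The coefficient of $k^2$ is a fixed nonzero multiple of $p-4aq$, while the coefficient of $k$ splits into a piece linear in $n$ and a piece that oscillates through values of $b$. The quantities $M_1$ and $M_2$ are designed precisely to bound these oscillations across residue classes of matching parity, the parity restriction $i \equiv j \pmod{2}$ arising because the step $k \mapsto k+1$ shifts $m_k(n)$ by a fixed integer and hence cycles through a fixed parity pattern. The hypothesis $p-(4a-M_1)q<0$, respectively $p-(4a+M_1)q > \max\{0,M_2\}$, is then exactly the condition that the parabola $k \mapsto \delta(k,n)$ is strictly decreasing, respectively strictly increasing, throughout the admissible $k$-range, uniformly in $n$ and in the periodic oscillations. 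Consequently the maximum is attained at a fixed endpoint $k=k_{\pm}$ of the range, independent of $n$.

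Once the endpoint is identified, $d_+[J_{\K}(n)]$ equals $\delta(k_{\pm},n)$ for $n \gg 0$, and substituting $m_{k_{\pm}}(n)$ and regrouping by powers of $n$ produces $An^2 + B(n)n + D(n)$ with $A$ constant and $B(n), D(n)$ periodic in $n$ after the appropriate parity adjustment. The inequality $B(n) \le 0$ can then be read from the explicit formula: the $n$-linear coefficient is a nonpositive combination of values of $b$ (using $q>1$ and $b \le 0$) plus a constant contribution that is precisely what the second case of the hypothesis controls through $\max\{0,M_2\}$. The main obstacle is exactly this bookkeeping step, where one must simultaneously handle the periodic fluctuations of $b$ and $d$, track parity shifts of $m_k(n)$, and verify that the top-degree term at $k_{\pm}$ survives.

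Finally, to transfer $js_K \subset bs_K$ to $js_{\K} \subset bs_{\K}$, I would use the classical description of boundary slopes of cables: any essential surface $S \subset M_K$ realizing a boundary slope equal to $4a$ can be combined with Seifert-fibered annuli in the cable space (the exterior of the $(p,q)$-torus knot in a solid torus) to produce an essential surface in $\M$ whose boundary slope matches the explicit expression for $4A$ obtained in the previous paragraph. Since the hypothesis supplies such an $S$ for each Jones slope of $K$, this construction witnesses every Jones slope of $\K$ as a boundary slope, completing the argument.
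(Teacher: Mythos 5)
Your overall skeleton (cabling formula, maximize the degree of the summands, then quote the boundary-slope result $q^2\,bs_K\cup\{pq\}\subset bs_{\K}$ to transfer $js_K\subset bs_K$) is the same as the paper's, but the central mechanism you propose does not work. In the cabling formula $J_{\K}(n)=v^{pq(n^2-1)/4}\sum_{k\in\CS_n}v^{-pk(qk+1)}J_K(2qk+1)$, the degree of the $k$-th summand is, on each congruence/sign piece, a genuine quadratic in $k$ with leading coefficient $-q(p-4qa)$ and with vertex at a \emph{bounded} value of $k$, while the admissible range $|k|\le (n-1)/2$ grows with $n$. Hence $k\mapsto\delta(k,n)$ is never monotone on the whole range for $n\gg 0$, and the hypotheses $p-(4a-M_1)q<0$ resp.\ $p-(4a+M_1)q>\max\{0,M_2\}$ are not monotonicity conditions. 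What actually happens is: if $p-4qa<0$ the parabola opens upward and the maximum sits at the outer ends $k\approx\pm n/2$ (with which end wins varying with $n$, giving $A=q^2a$ and a periodic $\ve_n,s_n$ bookkeeping), while if $p-4qa>0$ it opens downward and the maximum sits at a bounded, periodically varying $k=s_n$ in the \emph{interior} of the range (giving $A=pq/4$ and $B(n)=0$). So your conclusion that ``the maximum is attained at a fixed endpoint of the range, independent of $n$'' fails in both cases, and the formula for $A$, $B(n)$ you would extract from it is not justified.

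More importantly, the step you dismiss as ``the standard separation check'' is the entire content of the theorem, and your monotonicity argument was your only proposed justification for it. Because $d_+[J_K(\cdot)]$ is only a quasi-polynomial, the candidate maxima coming from different residue classes mod the period and from the two signs of $2qk+1$ must be shown to be pairwise distinct; otherwise leading coefficients of distinct summands could cancel and $d_+[J_{\K}(n)]$ could drop below $\max_k\delta(k,n)$. This is exactly where $M_1$ and $M_2$ enter: $M_1$ gives a slope-type separation between pieces, and in the regime $p-(4a+M_1)q>\max\{0,M_2\}$ the competing maxima are all at bounded $k$, so their differences are bounded and one needs the strict inequality against $\max\{0,M_2\}$ to dominate the differences $|d(i_1)-d(i_2)|$ of the constant terms. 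Your reading of $M_2$ as a device to force $B(n)\le 0$ is off target (when $p-4qa>0$ one gets $B(n)=0$ automatically once the maximizer is known to be bounded and unique); without the pairwise-distinctness argument the proof has a genuine gap. The parity observation you make (arguments $|2qk+1|$ within one sum differ by even integers, explaining the restriction $i\equiv j\pmod 2$ in $M_1,M_2$) is correct, and the final cabling-of-boundary-slopes step is fine in spirit, but it also requires knowing $4A\in\{4q^2a,\,pq\}$, which again depends on locating the maximizer correctly.
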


Theorem \ref{thm-quasi-constant} has applications to two open conjectures about the degree of  $J_K(n)$.
The first conjecture, formulated by the authors in  \cite{Effie-Anh-slope},  asserts  that the linear part
$b_K(n)$ is not positive and it detects the existence of essential annuli in the knot complement.

\begin{conjecture} \cite{Effie-Anh-slope}
\label{conj}
For every non-trivial knot $K \subset S^3$, we have $$b_K(n) \le 0.$$
Moreover, if $b_K(n)=0$ then   $K$ is a composite knot or  a cable knot or a torus knot. 
\end{conjecture}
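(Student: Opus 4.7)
The plan is to connect the linear coefficient $b_K(n)$ to topological invariants of essential spanning surfaces in $M_K$, and then to invoke the characteristic submanifold theory of Jaco--Shalen--Johannson to classify the equality case. The guiding principle is the ``strong slope conjecture'' philosophy: the coefficients $a_K(n)$ and $b_K(n)$ should be read off from essential surfaces $\Sigma_n \subset M_K$ whose boundary slope equals $4a_K(n)$, with $b_K(n)$ determined by a normalized ratio of $\chi(\Sigma_n)$ to $|\partial \Sigma_n|$. Under this correspondence, $b_K(n) \le 0$ becomes the assertion that $\Sigma_n$ has non-positive Euler characteristic, while equality forces $\Sigma_n$ to be a disjoint union of annuli once disks are excluded by non-triviality of $K$ and M\"obius components are reduced to annuli by passing to an orientation double cover.

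First I would verify the inequality for classes of knots in which the surfaces $\Sigma_n$ are explicitly constructible. For adequate knots, Futer--Kalfagianni--Purcell's state surfaces realize $4a_K(n)$ and yield a topological expression for $b_K(n)$ of the form $-\negeul(\guts(\Sigma_n))/|\partial \Sigma_n|$, which is manifestly non-positive. For Montesinos knots and for the $2$-fusion knots treated in this paper, explicit computation gives the same conclusion. The inductive step would combine these base cases with stability under knot operations: Theorem~\ref{thm-quasi-constant} provides cabling stability, an analogous statement for connect sums follows from multiplicativity of $J_K(n)$, and one seeks a parallel statement for general satellite operations with hyperbolic companion. Together with the JSJ decomposition these would propagate the inequality from the base families to all knots.

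For the equality case, $b_K(n) = 0$ forces a Jones surface $\Sigma_n$ to be essential with $\negeul(\Sigma_n) = 0$, hence an essential annulus. By the characteristic submanifold theorem, the presence of such an annulus in a knot exterior forces $M_K$ to admit a non-trivial decomposition along essential tori and annuli: the annulus either lies inside a Seifert-fibered piece meeting $\partial M_K$ (so that $K$ is a torus knot or a cable knot) or realizes a connect-sum decomposition (so that $K$ is composite). Thus the equality case exhausts precisely the three families named in the conjecture, and combining this with the first half gives the full statement.

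The main obstacle is that the entire scheme rests on the existence and topological control of Jones surfaces $\Sigma_n$, which is the content of the strong slope conjecture and is itself open in general. Even when such surfaces are known to exist, extracting $b_K(n)$ sharply requires controlling the cancellation in the fusion or skein expansion of $J_K(n)$, and propagating the inequality across satellite operations beyond cabling and connect sums needs a uniform understanding of how the quasi-polynomial structure interacts with essential annuli in the companion. A complementary route would be to attempt a direct combinatorial bound on $b_K(n)$ via the diagrammatic state sum formulas of Armond--Dasbach and Lickorish, but making such a bound sharp enough to detect the equality case appears to require precisely the topological input one is trying to produce.
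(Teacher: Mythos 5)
The statement you are trying to prove is labeled as a \emph{conjecture} in the paper, and it is genuinely open: the paper offers no proof of it for general knots. What the paper actually does is verify the conjecture for specific families --- it computes $b_K(n)$ explicitly for the two-parameter family of $2$-fusion knots (Theorem \ref{prop:b}), checks non-positivity and the equality case by hand there, and then uses the cabling stability result (Theorem \ref{thm-quasi-constant}) to propagate the conjecture to cables satisfying an explicit inequality on $(p,q)$. So any submission claiming a proof of the full conjecture should be treated with suspicion from the outset.

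Your proposal is a reasonable sketch of the ``strong slope conjecture'' philosophy, but it is not a proof, and you essentially concede this in your final paragraph. The concrete gaps are: (i) the existence of essential Jones surfaces $\Sigma_n$ realizing $4a_K(n)$ as a boundary slope, with $b_K(n)$ expressible as a normalized Euler characteristic ratio, is itself an open conjecture for general knots --- it is known only for restricted classes such as adequate knots, where the state surface construction applies; (ii) your ``inductive step'' claiming to reach all knots from base families via JSJ decomposition and satellite operations is not an argument: there is no known stability statement for general satellites with hyperbolic companion, and no known finite list of base families that generates all knots under the operations you have stability for (cabling and connected sum); (iii) even in the equality case, you need an essential annulus to exist in $M_K$ \emph{because} $b_K(n)=0$, which again presupposes the unproven correspondence between degree coefficients and essential surfaces --- the Jaco--Shalen characterization of composite, cable, and torus knots via essential annuli (which the paper does cite) only finishes the job once that annulus is produced. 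If you want to contribute something provable, the realistic target is what the paper does: establish the conjecture for an explicit family by direct computation of the degree quasi-polynomial, and then extend to cables via Theorem \ref{thm-quasi-constant}.
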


Note that $b_U(n)=1/2$ for the trivial knot $U$. 
 It is known that a knot $K$ is composite or cable or a torus knot if and only if its complement $M_K$ contains  embedded essential annuli \cite[Lemma V.1.3.4]{jacoshalen}. Thus the last part of Conjecture \ref{conj} can alternatively be stated as
follows: If $b_K(n)=0$, then $M_K$ contains an embedded essential  annulus.

The second conjecture is the Slope Conjecture formulated by Garoufalidis in  \cite{Ga-slope}.

\begin{conjecture} [Slope Conjecture] \cite{Ga-slope}
\label{slope}
For every knot $K \subset S^3$ we have 
$$js_K : = \{4a_K(n)\} \subset bs_K.$$
\end{conjecture}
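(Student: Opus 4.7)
The Slope Conjecture is a major open problem, so what follows is a strategy rather than a proof. I would pursue a three-stage attack: (i) verify the conjecture on a combinatorially accessible base class of knots, (ii) prove its stability under standard knot-theoretic operations, and (iii) reduce the general case to (i) and (ii). For stage (i) the natural class is that of $A$-adequate and $B$-adequate knots, which contains all alternating knots, positive braids, and many Montesinos knots. For such knots the extremal terms of the Kauffman bracket state sum do not collapse, so $a_K(n)$ (and the analogous coefficient for $d_-[J_K(n)]$) can be read off explicitly from the all-$A$ or all-$B$ resolution graph; moreover the corresponding Kauffman state surface is $\pi_1$-essential in $M_K$ with boundary slope equal to the Jones slope, by the combined work of Ozawa and Futer-Kalfagianni-Purcell. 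This would verify the conjecture on the adequate class.

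For stage (ii), one would like to show that the inclusion $js_K \subset bs_K$ propagates under connected sum, $(p,q)$-cabling, and ideally further satellite operations. Theorem~\ref{thm-quasi-constant} in the present paper is precisely such a propagation step for cabling: under its quasi-polynomial hypotheses on $d_+[J_K(n)]$ and the numerical constraint on $(p,q)$, it yields the implication $js_K \subset bs_K \Rightarrow js_{\K} \subset bs_{\K}$. Connected sums behave well because both the colored Jones polynomial and the boundary-slope set are compatible with prime decomposition along the meridional framing. Iterating, (i) and (ii) together settle the conjecture for every knot obtained from an adequate base knot by connect sum and cabling.

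The decisive obstacle is stage (iii), the hyperbolic non-adequate case. For such a knot neither side of the conjectured inclusion is combinatorially accessible: there is no formula for $a_K(n)$ beyond Garoufalidis' quasi-polynomiality theorem, and Hatcher's theorem gives $bs_K$ only implicitly, with no built-in bridge to the colored Jones polynomial. The most plausible route is through the $A$-polynomial. The AJ conjecture predicts that $a_K(n)$ and the linear term $b_K(n)$ are governed by the Newton polygon of $A_K$, while Culler-Shalen theory shows that slopes of sides of that Newton polygon arise as boundary slopes via ideal points of the $\mathrm{SL}_2(\CC)$-character variety of $M_K$. A uniform identification of the slopes produced on the Jones side with those detected on the character-variety side, valid for all hyperbolic non-adequate knots, is where the substantive difficulty lies, and closing this gap is the principal obstacle to a complete proof of the conjecture.
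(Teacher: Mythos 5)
There is a fundamental mismatch here: the statement you were asked to address is Conjecture \ref{slope}, which is an open conjecture, and the paper contains no proof of it. What the paper actually does is (a) record the classes of knots for which the conjecture is known, and (b) add a new class, namely $(p,q)$-cables of $2$-fusion knots for $(p,q)\in\CS_{m_1,m_2}$, via Theorem \ref{thm:slope-cable}. That verification rests on three concrete ingredients: the explicit quasi-polynomial formulas for $d_+[J_K(n)]$ of $2$-fusion knots from \cite{DG,GV} (Theorem \ref{a_K} and the computation in Theorem \ref{prop:b}), the numerical bounds $M_1,M_2$ extracted from those formulas in Lemma \ref{M}, and the cabling propagation result Theorem \ref{thm-quasi-constant} together with the boundary-slope inclusion $q^2 bs_K\cup\{pq\}\subset bs_{K_{p,q}}$ of Theorem \ref{bdry slope}. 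Your proposal, by contrast, is a research program: stages (i) and (ii) are accurate summaries of what is already known (adequate knots via \cite{FKP} and essential state surfaces; cabling propagation, which is exactly Theorem \ref{thm-quasi-constant} and Corollary \ref{cables} of this paper), but stage (iii) is not an argument. It names the obstacle --- relating $a_K(n)$ to boundary slopes for general hyperbolic non-adequate knots, say through the AJ conjecture and Culler--Shalen theory --- and then correctly concedes that bridging it is the open problem. A proposal whose final step is ``the substantive difficulty lies here'' has a genuine gap coinciding with the entire content of the conjecture.

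To be clear about the comparison: nothing in your outline is wrong, and your stage (ii) matches the mechanism this paper actually uses. But the paper never claims to prove Conjecture \ref{slope}; it proves unconditional statements (Proposition \ref{quasi-constant}, Theorem \ref{thm-quasi-constant}) whose hypotheses it then verifies for a specific two-parameter family by direct computation of $d_+[J_K(n)]$. If you want your write-up to align with what is actually established, you should recast it as a verification for the families where explicit degree formulas are available --- which requires carrying out the computations of Theorem \ref{prop:b} and Lemma \ref{M}, not merely citing the existence of a strategy --- and explicitly flag that the general case remains open.
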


Conjecture  \ref{slope} is known for the following classes of knots:
\begin{enumerate}

\item alternating knots, non-alternating knots with up to nine crossings, torus knots, the family ($-2, 3, 2m+3)$ of 3-string pretzel  knots (by Garoufalidis \cite{Ga-slope}).

\item adequate knots (by Futer, Kalfagianni and Purcell \cite{FKP}).

\item iterated cable knots of $B$-adequate knots,  iterated torus knots, and iterated of all non-alternating knots with up to nine crossings that have period two  (by Kalfagianni and Tran \cite{Effie-Anh-slope}).

\item a certain 2-parameter family of closed 3-braids, called 2-fusion knots (by Garoufalidis and Dunfield \cite{DG} and Garoufalidis and van der Veen \cite{GV}).

\item some families of 3-string pretzel knots (by Lee and van der Veen \cite{leV}).
\end{enumerate}

In this paper we will verify Conjecture \ref{conj} for 2-fusion knots and ``most" of their cables. For cases (1)-(3) 
the conjecture was verified in  \cite{Effie-Anh-slope}.
Theorem \ref{thm-quasi-constant} implies the following.

\begin{corollary} \label{cables}
 Let $K$ be a knot such that for $n \gg 0$ we have
$$
d_+[J_K(n)]=an^2+b(n)n+d(n)
$$
where $a$ is a constant, $b(n)$ and $d(n)$ are periodic functions. Let $M_1, M_2$
be as in the statement of Theorem \ref{thm-quasi-constant}.
If $K$ satisfies Conjectures \ref{conj}  and  \ref{slope}, then
$\K$ satisfies the conjectures  provided that  $p- (4a-M_1)q < 0$ or $p- (4a + M_1)q > \max\{0,M_2\}$. 
 \end{corollary}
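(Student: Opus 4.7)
The plan is to read Corollary \ref{cables} off Theorem \ref{thm-quasi-constant} with essentially no extra content: the two conclusions of the theorem produce the two conjectures for $\K$.

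First, I would verify that the hypotheses of Theorem \ref{thm-quasi-constant} are in force. The shape $d_+[J_K(n)]=an^2+b(n)n+d(n)$ with $a$ constant and $b,d$ periodic is assumed in the statement of Corollary \ref{cables}. The additional sign condition $b(n)\le 0$ required by the theorem is precisely the first half of Conjecture \ref{conj} for $K$, which is part of the standing hypothesis. The numerical inequality on $p,q$ involving $M_1$ and $M_2$ is identical in the two statements, so Theorem \ref{thm-quasi-constant} applies to the cable $\K$.

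Next I would deduce Conjecture \ref{conj} for $\K$. Theorem \ref{thm-quasi-constant} gives
$$d_+[J_{\K}(n)]=An^2+B(n)n+D(n)$$
with $A$ constant, $B$ and $D$ periodic, and $B(n)\le 0$; this is the first half of Conjecture \ref{conj} for $\K$. For the second half, I would simply observe that since we are operating under the convention $q>1$, the knot $\K$ is by construction either a non-trivial cable knot (when $K$ is non-trivial) or a torus knot (when $K$ is the unknot). In either case $\K$ already belongs to the list of knots permitted by Conjecture \ref{conj} when the linear coefficient vanishes, so the implication holds vacuously for $\K$ regardless of whether $B(n)=0$ ever occurs.

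Finally, for the Slope Conjecture I would invoke the last sentence of Theorem \ref{thm-quasi-constant} verbatim: since $K$ satisfies Conjecture \ref{slope} we have $js_K\subset bs_K$, and the theorem then produces $js_{\K}\subset bs_{\K}$, which is Conjecture \ref{slope} for $\K$. There is no real obstacle here; all of the analytic and geometric work is already packaged into Theorem \ref{thm-quasi-constant}, and the only point worth spelling out is the remark in the previous paragraph that the ``composite or cable or torus'' alternative of Conjecture \ref{conj} is automatic for $\K$ from the very structure of a cable.
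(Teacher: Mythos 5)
Your proposal is correct and matches the paper's (implicit) argument: the corollary is stated as an immediate consequence of Theorem \ref{thm-quasi-constant}, with the hypothesis that $K$ satisfies Conjecture \ref{conj} supplying the sign condition $b(n)\le 0$, the theorem's conclusions giving $B(n)\le 0$ and $js_{\K}\subset bs_{\K}$, and the second clause of Conjecture \ref{conj} for $\K$ holding automatically because $\K$ is by construction a cable (or torus) knot. Your write-up just makes explicit what the paper leaves unstated.
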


As an application of  Corollary \ref{cables} we will prove Conjectures \ref{conj}  and  \ref{slope}
for cables of 2-fusion knots.

We would like to thank Stavros Garoufalidis for helpful correspondence.

\section{Jones slopes of cable knots}

Let $K_{p,q}$ be the $(p,q)$-cable of a knot $K$, where $q > 1$. For $n>0$, let $\CS_n$ be the set of all $k$ such that 
 $$
 |k| \le (n-1)/2 \quad \text{and} \quad k \in \begin{cases} \BZ &\mbox{if } n \text{ is odd}, \\ 
 \BZ+\frac{1}{2} & \mbox{if } n \text{ is even}. \end{cases}
 $$
By \cite{Ve}, for $n>0$ we have
\begin{equation}
\label{cables11}
J_{K_{p,q}}(n)= v^{pq(n^2-1)/4} \sum_{k \in \CS_n} v^{-pk(qk+1)} J_K(2qk+1),
\end{equation} 
where it is understood that $J_K(-m)=-J_K(m)$.

\begin{proposition}
\label{quasi-constant}
Let $K$ be a knot such that for $n \gg 0$ we have
$$
d_+[J_K(n)]=an^2+b(n)n+d(n)
$$
where $a$ is a constant, $b(n)$ and $d(n)$ are periodic functions with $b(n) \le 0$. Let
\begin{eqnarray*}
M_1 &=&  \max\{|b(i)-b(j)| : i  \equiv j \pmod{2} \},\\
M_2 &=& \max\{2b(i) + |b(i)-b(j)| + |d(i)-d(j)| : i \equiv j \pmod{2}\}.
\end{eqnarray*}

Suppose $p- (4a-M_1)q < 0$ or $p- (4a + M_1)q > \max\{0,M_2\}$. Then for $n \gg 0$ we have
$$d_+[J_{\K}(n)]=An^2+B(n)n+D(n)$$
where $A$ is a constant with $A \in \{q^2a, pq/4\}$, and $B(n), D(n)$ are periodic functions with $B(n) \le 0$.
\end{proposition}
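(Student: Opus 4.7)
Substitute the assumed quasi-polynomial form of $d_+[J_K]$ into the cabling formula \eqref{cables11} and read off the top-degree term in the resulting sum. Using $d_+[J_K(-m)] = d_+[J_K(m)]$, the highest degree of the summand $v^{-pk(qk+1)}J_K(2qk+1)$ equals $\frac{pq(n^2-1)}{4} + F_n(k)$ for $|2qk+1|$ large enough to apply the hypothesis, where
\[
F_n(k) = q(4aq-p)k^2 + \bigl(4aq-p+2qb(2qk+1)\bigr)k + \bigl(a+b(2qk+1)+d(2qk+1)\bigr)
\]
when $k \ge 0$ (and a similar expression, with a sign change in the linear term and $b, d$ evaluated at $-2qk-1$, when $k < 0$). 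Hence $d_+[J_\K(n)] = \frac{pq(n^2-1)}{4} + \max_{k \in \CS_n} F_n(k)$, provided the maximizing $k$ is unique so that no cancellation of leading coefficients can occur among the summands.

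\emph{Case 1} ($p - (4a-M_1)q < 0$): The coefficient $q(4aq-p)$ of $k^2$ is strictly positive, in fact $>q^2 M_1$. So $F_n$ is essentially an upward-opening parabola in $k$, and its maximum on $\CS_n$ is attained at an endpoint $k = \pm(n-1)/2$. Direct comparison gives
\[
F_n\!\bigl(\tfrac{n-1}{2}\bigr) - F_n\!\bigl(-\tfrac{n-1}{2}\bigr) = (n-1)\bigl[(4aq-p) + q\bigl(b(N_+)-b(N_-)\bigr)\bigr] + O(1),
\]
where $N_\pm = q(n-1)\pm 1$ have equal parity, so the bracket is $\ge (4aq-p) - qM_1 > 0$. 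Thus $k = (n-1)/2$ is the unique maximizer for $n \gg 0$, and evaluation yields $A = aq^2$ together with
\[
B(n) = q\,b(q(n-1)+1) - \tfrac{(q-1)(4aq-p)}{2},
\]
a sum of two non-positive quantities, confirming $B(n) \le 0$.

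\emph{Case 2} ($p - (4a+M_1)q > \max\{0,M_2\}$): Set $P = p - 4aq > 0$; now the coefficient $-qP$ of $k^2$ is strictly negative. For any fixed $k$, $F_n(k)$ is independent of $n$ once $k \in \CS_n$, and $F_n(k) \to -\infty$ as $|k| \to \infty$; hence the maximum on $\CS_n$ is attained at a bounded $k^*$ depending only on the parity of $n$. This produces $A = pq/4$, $B(n) \equiv 0 \le 0$, and $D(n)$ periodic of period dividing $2$. Uniqueness of $k^*$, which excludes cancellation, is checked by a pairwise comparison: for $k_1 \ne k_2$ of the same type, the difference $F_n(k_1) - F_n(k_2)$ is dominated by the strictly negative contribution $-P[k_1(qk_1+1) - k_2(qk_2+1)]$, while the residual fluctuations from $b, d$ together with the terms $2qk_i\,b(2qk_i+1)$ are controlled using the bound $|b(i)-b(j)|+|d(i)-d(j)| \le M_2 - 2b(i)$ implicit in the definition of $M_2$. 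The hypothesis $P > M_1 q + \max\{0,M_2\}$ is calibrated precisely so that the $-P$-term strictly dominates.

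The main obstacle is the uniqueness/non-cancellation verification in Case 2. The specific algebraic form of $M_2$---which intertwines $2b(i)$ with $|b(i)-b(j)|$ and $|d(i)-d(j)|$---is exactly what is needed to dominate the $b, d$-fluctuations against the quadratic decay $P\,k(qk+1)$; a naive separate bound on $|b|$ or $|d|$ would not suffice, since such absolute bounds are not available from the hypotheses on $M_1, M_2$ alone.
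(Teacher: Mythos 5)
Your proposal is correct and follows essentially the same route as the paper's own proof: read off from the cabling formula the degree function $f(k)=-pk(qk+1)+d_+[J_K(|2qk+1|)]$, split according to the sign of $p-4aq$ (equivalently your two hypothesis cases), use $M_1$ to control the same-parity fluctuations of $b$ and $M_2$ to force the maximizing $k$ to be unique (so no cancellation of top terms), and conclude $A=q^2a$ with $B(n)\le 0$ in the first case and $A=pq/4$, $B(n)=0$ in the second. The only differences are minor: in Case 1 you pin the maximizer down to exactly $k=(n-1)/2$ (the paper only locates it within bounded distance of $\pm n/2$, which suffices), and in Case 2 your uniqueness verification is the paper's pairwise comparison (its Subcases 2.1--2.2, where the sign condition $b\le 0$ lets the $2qb(i_1)$ term reinforce the $-(p-4aq)$ term) left in sketch form.
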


\begin{proof} Fix $n \gg 0$. Recall the cabling formula \eqref{cables11} of the colored Jones polynomial
$$J_{K_{p,q}}(n)= v^{pq(n^2-1)/4} \sum_{k \in \CS_n} v^{-pk(qk+1)} J_K(2qk+1).$$
In the above formula, there is a sum. Under the assumption of the proposition, we will show that there is a unique term of the sum whose highest degree is strictly greater than those of the other terms. This implies that the highest degree of the sum is exactly equal to the highest degree of that unique term. 

For $k \in \CS_n$ let $$f(k) := d_+ [v^{-pk(qk+1)} J_K(2qk+1)] = -pk(qk+1)+d_+[J_K(|2qk+1|)].$$
The goal is to show that $f(k)$ attains its maximum on $\CS_n$ at a unique $k$. 

Since $d_+[J_K(n)]$ is a quadratic quasi-polynomial, $f(k)$ is a piece-wise quadratic polynomial. The above goal will be achieved in 2 steps. In the first step we show that $f(k)$ attains its maximum on each piece at a unique $k$. Then in the second step we show that the maximums of $f(k)$ on all the pieces are distinct.

\textbf{Step 1.} Let $\pi$ be the period of $d_+[J_K(n)]$. For $\ve \in \{\pm 1\}$ and $0 \le i < \pi$, let $h_i^{\ve}(x)$ be the quadratic real polynomial defined by
$$
h_i^{\ve}(x) := ( -pq + 4q^2 a ) x^2 + ( -p + 4q a + 2q b(i) \ve ) x +  a + b(i) \ve + d(i).
$$
For each $k \in \CS_n$, we have $f(k)= h^{\ve_k}_{i_k}(k)$
for a unique pair $(\ve_k, i_{k})$. Let
$$I_n := \{ (\ve_k,i_{k}) \mid k \in \CS_n\}.$$
Then $f(k)$ is a piece-wise quadratic polynomial of exactly $|I_n|$ pieces, each of which is associated with a unique pair $(\ve, i)$ in $I_n$.

For each $(\ve,i) \in I_n$, let $$\CS_{n,\ve,i} := \{ k \in \CS_n \mid (\ve_k, i_k) = (\ve,i)\}$$
which is the set of all $k$ on the piece associated with $(\ve,i)$. 

The quadratic polynomial $h_i^{\ve}(x)$ is concave up if $p-4qa<0$, and concave down if $p-4qa>0$.  Hence, for $n \gg 0$, $h_i^{\ve}(k)$ is maximized on the set $\CS_{n,\ve,i}$ at a unique $k=k_{n,\ve,i}$, where
$$ k_{n,\ve,i} := \begin{cases} \max \CS_{n,\ve,i} &\mbox{if } (p - 4qa)\ve < 0, \\
\min \CS_{n,\ve,i} &\mbox{if } (p - 4qa)\ve > 0.  \end{cases}$$
Note that, as in the proof of \cite[Proposition 3.2]{Effie-Anh-slope}, we use the assumption that $b(i) \le 0$ when $(p - 4qa)\ve > 0$. Moreover we have 
$$ \begin{cases}  |k_{n,\ve,i}|  \to \infty \mbox{ as }  n \to \infty, &\mbox{if } p - 4qa < 0 \\
 |k_{n,\ve,i}|  \le \pi, &\mbox{if } p - 4qa > 0.  \end{cases}$$

\textbf{Step 2.} Let 
$$\text{Max}_n := \max \{f(k) \mid k \in \CS_n\}.$$
From step 1 we have $\text{Max}_n =  \max \{ h_{i}^{\ve}(k_{n, \ve, i}) \mid (\ve, i) \in I_n \}.$ We claim that $$h_{i_1}^{\ve_1}(k_{n,\ve_1,i_1}) \not= h_{i_2}^{\ve_2}(k_{n,\ve_2,i_2})$$
for $(\ve_1,i_1) \not= (\ve_2,i_2)$. 

Indeed, let $k_1:=k_{n,\ve_1,i_1}$ and $k_2:=k_{n,\ve_2,i_2}$. Note that $k_1 \not= k_2$. Moreover, $k_1$ and $k_2$ are both in  $\BZ$ or $\frac{1}{2} + \BZ$. As a result we have $k_1 \pm  k_2 \in \BZ$, and $i_1 - i_2 \equiv 2q (k_1 - k_2) \equiv 0 \pmod{2}$. Let
$$\sigma := h_{i_1}^{\ve_1}(k_1) - h_{i_2}^{\ve_2}(k_2).$$

Without loss of generality, we can assume that $|k_1| \ge |k_2|$. Then we write $\sigma = \sigma' + d(i_1) - d(i_2)$ where
$$
\sigma' := \begin{cases} (k_1 - k_2) \Big( (-p+4qa) \big( q(k_1+k_2)+1 \big) + 2qb(i_1)\ve_1 \Big) \\ \qquad \qquad \qquad + \, \big( b(i_1) - b(i_2) \big) \ve_1 (2qk_2 + 1) &\mbox{if } \ve_1 = \ve_2, \\
\Big( (-p+4qa)(k_1 - k_2) + 2b(i_1)\ve_1 \Big) \big( q(k_1+k_2)+1 \big) 
\\ \qquad \qquad \qquad - \, \big( b(i_1) - b(i_2) \big) \ve_1 (2qk_2 + 1) &\mbox{if } \ve_1 \not= \ve_2.  \end{cases}
$$
We consider the following 2 cases.

\smallskip

\textit{Case 1:} $p - (4a - M_1)q <0 $. In particular, we have $p - 4qa <0$. There are 2 subcases.

\underline{Subcase 1.1:} $\ve_1 = \ve_2$. Since $k_1$ and $k_2$ have the same sign, we have 
$$|q(k_1+k_2)+1| - |2qk_2+1| = 2q (|k_1| - |k_2|) \ge 0.$$
Hence
\begin{eqnarray*}
|\sigma'| &\ge& \big| (-p+4qa) \big( q(k_1+k_2)+1 \big) + 2qb(i_1)\ve_1 \big| - \big| \big( b(i_1) - b(i_2) \big) \big( q(k_1+k_2)+1 \big) \big| \\
&\ge& \big( -p + 4qa - |b(i_1) - b(i_2)| \big) |q(k_1+k_2)+1| + 2q b(i_1).
\end{eqnarray*}
Since $|q(k_1+k_2)+1| \to \infty$ as $n \to \infty$, and $$-p + 4qa - |b(i_1) - b(i_2)| \ge -p + 4qa - M_1 >0$$ 
we get $|\sigma'| \to \infty$ as $n \to \infty.$

\smallskip

\underline{Subcase 1.2:} $\ve_1 \not= \ve_2$. Since $k_1$ and $k_2$ have opposite signs, we have 
$$(q|k_1-k_2| + 1) - |2qk_2+1| \ge 2q (|k_1| - |k_2|)  \ge 0.$$
Hence
\begin{eqnarray*}
|\sigma'| &\ge& \big|  (-p+4qa)(k_1 - k_2) + 2b(i_1)\ve_1 \big| -  | b(i_1) - b(i_2)| \, ( q|k_1 - k_2| + 1) \\
&\ge& \big( -p + 4qa - q |b(i_1) - b(i_2)| \big) |k_1 - k_2| -  | b(i_1) - b(i_2)|  + 2 b(i_1).
\end{eqnarray*}
Since $|k_1 - k_2| \to \infty$ as $n \to \infty$, and $$-p + 4qa - q |b(i_1) - b(i_2)| \ge -p + 4q a  - q M_1 >0,$$ we get $|\sigma'| \to \infty$ as $n \to \infty.$

\medskip 

\textit{Case 2:} $p- (4a + M_1)q > \max\{0, M_2\}$. There are 2 subcases.

\underline{Subcase 2.1:} $\ve_1 = \ve_2$. Note that $q(k_1+k_2)+1$ and $\ve_1$ have the same sign. Moreover, both $-p+4qa$ and $2qb(i_1)$ are non-positive. As in subcase 1.1 we have 
\begin{eqnarray*}
|\sigma'| &\ge& \big| (-p+4qa) \big( q(k_1+k_2)+1 \big) + 2qb(i_1)\ve_1 \big| - \big| \big( b(i_1) - b(i_2) \big) \big( q(k_1+k_2)+1 \big) \big| \\
&=& \big( p - 4qa - |b(i_1) - b(i_2)| \big) \, |q(k_1+k_2)+1| - 2q b(i_1).
\end{eqnarray*}
Since $p - 4qa - |b(i_1) - b(i_2)|  \ge p - 4qa - M_1 > \max \{0, M_2\}$, we get 
$$|\sigma'| > M_2  -2 q b(i_1) \ge |d(i_1) - d(i_2)|.$$

\underline{Subcase 2.2:} $\ve_1 \not= \ve_2$. Note that $k_1 - k_2$ and $\ve_1$ have the same sign. Moreover, both $-p+4qa$ and $2qb(i_1)$ are non-positive. As in subcase 1.2 we have
\begin{eqnarray*}
|\sigma'| &\ge& \big|  (-p+4qa)(k_1 - k_2) + 2b(i_1)\ve_1 \big| -  | b(i_1) - b(i_2)| \, ( q|k_1 - k_2| + 1) \\
&=& \big( p - 4qa - q |b(i_1) - b(i_2)| \big) |k_1 - k_2| - | b(i_1) - b(i_2)|  - 2 b(i_1).
\end{eqnarray*}
Since $p - 4qa - q |b(i_1) - b(i_2)| \ge p - 4qa - q M_1 > \max\{0, M_2\}$, we get $$|\sigma'| > M_2 -  | b(i_1) - b(i_2)|  -2 b(i_1) \ge |d(i_1) - d(i_2)|.$$

In all cases, for $n \gg 0$ we have $|\sigma'| > |d(i_1)-d(i_2)|.$ Hence $$\sigma = \sigma' + d(i_1) - d(i_2) \not= 0.$$
We have proved that $f(k)$ attains its maximum on $\CS_n$ at a unique $k$. More precisely, there exists a unique $(\ve_n, i_n) \in I_n$ such that $h_{i_n}^{\ve}(k_{n, \ve_n, i_n}) = \text{Max}_n$. 

Equation \eqref{cables} then implies that  
\begin{eqnarray*}
d_+[J_{K_{p,q}}(n)] &=& pq(n^2-1)/4 + h^{\ve_n}_{i}(k_{n,\ve_n,i_n}).
\end{eqnarray*}

If $p - 4qa < 0$ then $k_{n,\ve_n,i_n} = \ve_n (n/2 + s_n)$, where $s_n$ is a periodic sequence and $s_n \le -1/2$. We have
\begin{eqnarray*}
d_+[J_{K_{p,q}}(n)] &=& q^2a n^2 + \big( (-p + 4qa) (q s_n + \ve_n /2) + qb(i_n) \big) n -  pq/4 \\
&& + \, (-p + 4qa)s_n (q s_n + \ve_n) + 2q b(i_n) s_n+  a + b(i_n) \ve_n + d(i_n).
\end{eqnarray*}
In this case we have $$B(n)=(-p + 4qa) (q s_n + \ve_n /2) + qb(i_n) < 0,$$ since $q s_n + \ve_n /2 \le -q/2 + 1/2 <0$ and $b(i_n) \le 0$.

If $p - 4qa > 0$ then $k_{n,\ve_n,i_n} = s_n$, where $s_n$ is a periodic function. We have
\begin{eqnarray*}
d_+[J_{K_{p,q}}(n)] &=& pq (n^2-1)/4 
+ (-p + 4qa)s_n (q s_n + 1) + 2q b(i_n) \ve_n s_n\\
&& + \,  a + b(i_n) \ve_n + d(i_n).
\end{eqnarray*}
In this case we have $B(n)=0$.

This completes the proof of Proposition \ref{quasi-constant}.
\end{proof}

\begin{remark} The proof of Proposition \ref{quasi-constant} can be slightly modified to give the following.

Let $K$ be a knot such that for $n \gg 0$ we have
$$
d_+[J_K(n)]=a(n)n^2+b(n)n+d(n)
$$
where $a(n)$, $b(n)$ and $d(n)$ are periodic functions with $b(n) \le 0$. Let
$$a_M = \max\{a(n)\}, \quad a_m = \min\{a(n)\} $$
and
\begin{eqnarray*}
M_1 &=&  \max\{|b(i)-b(j)| : i  \equiv j \pmod{2} \},\\
M_2 &=& \max\{2b(i) + |b(i) - b(j)| +|d(i)-d(j)| : i \equiv j \pmod{2}\}.
\end{eqnarray*}

(1) Suppose $p- (4 a_m -M_1)q < 0$. Then for $n \gg 0$ we have
$$d_+[J_{\K}(n)]=A(n)n^2+B(n)n+D(n)$$
where $A(n)$, $B(n)$ and $D(n)$ are periodic functions with $\{A(n)\} \subset \{q^2 a(n)\}$ and $B(n) < 0$.

(2) Suppose $\{a(n)\}$ has period at most $2$ and  $p- (4a_M + M_1)q > \max\{0,M_2\}$. Then for $n \gg 0$ we have
$$d_+[J_{\K}(n)]=pqn^2/4+O(1).$$
\end{remark}

We now recall the following result about the behavior of the boundary slopes under knot cabling in \cite{Effie-Anh-slope}.

\begin{theorem} \cite{Effie-Anh-slope}
\label{bdry slope}
 For every knot $K \subset S^3$ and  $(p, q)$ coprime  integers
we have 
 $$\left( q^2 bs_K \cup \{pq\} \right) \subset bs_{K_{p,q}}.$$
\end{theorem}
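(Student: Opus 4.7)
The plan is to decompose $\M$ along the cable torus $T := \bdy n(K)$ into the knot exterior $M_K$ and the cable space $\C := n(K) \setminus n(\K)$, both of which are irreducible with $T$ incompressible on both sides, and then construct the required essential surfaces in $\M$ by gluing essential pieces across $T$. Two essential surfaces in $\C$ drive the construction: the \emph{cabling annulus} $A$, whose boundary on $T$ is a $(p,q)$-curve of class $p\mu + q\lambda$ and whose boundary on $\bdy n(\K)$ is the pattern push-off of $\K$ of class $pq\mu' + \lambda'$; and the surface $D'$ obtained from a meridional disk of $n(K)$ by deleting its $q$ transverse intersections with $\K$, with boundary $\mu$ on $T$ and $q\mu'$ on $\bdy n(\K)$. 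Here $(\mu,\lambda)$ and $(\mu',\lambda')$ denote the standard meridian--longitude bases of $T$ and $\bdy n(\K)$.

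For the inclusion $\{pq\} \subset bs_{\K}$, I would use that $\K$ is isotopic to a $(p,q)$-curve on a torus $T_0 \subset \mathrm{int}\,n(K)$ parallel to $T$. Removing an open annular neighborhood of $\K$ inside $T_0$ produces an annulus properly embedded in $\M$ whose two boundary components are pattern push-offs of $\K$ lying on $\bdy n(\K)$, each of slope $pq$. Its essentiality follows from the incompressibility of $T_0$ in $\M$.

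For the inclusion $q^2 bs_K \subset bs_{\K}$, let $S \subset M_K$ be essential of slope $r = a/b$ with $N$ boundary components. I would take $q$ parallel copies of $S$, whose total boundary on $T$ represents $qN(a\mu + b\lambda)$, and then construct an auxiliary surface $F \subset \C$ consisting of $|bN|$ oriented copies of $A$ together with $|(qa - pb)N|$ oriented copies of $D'$, with signs chosen for coherent orientations. A direct homological calculation then gives $\bdy F \cap T = qN(a\mu + b\lambda)$ and $\bdy F \cap \bdy n(\K) = N(q^2 a\mu' + b\lambda')$, so that after resolving intersections and gluing across $T$, the resulting properly embedded surface $\hat S \subset \M$ has boundary slope $q^2 r$ on $\bdy n(\K)$.

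The main obstacle will be verifying the essentiality of $\hat S$. The essentiality of $F$ in $\C$ follows from $A$ being a vertical (saturated) essential annulus and $D'$ being a horizontal essential surface in the Seifert-fibered structure of $\C$, together with the fact that parallel copies of such surfaces remain essential. Combined with the essentiality of $S$ in $M_K$ and the incompressibility of $T$ in $\M$, a standard innermost-disk argument across $T$ shows that $\hat S$ is incompressible and $\bdy$-incompressible. A technical subtlety arises when the oriented multiplicities on $A$ and $D'$ have opposite signs, which requires oriented cut-and-paste in a collar of $T$ before attaching to the copies of $S$; this is a purely local adjustment that preserves the boundary class computed above.
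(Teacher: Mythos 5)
Your overall strategy --- splitting $\M$ along the companion torus $T$, realizing $pq$ by the cabling annulus, and realizing $q^2r$ by gluing $q$ copies of an essential surface $S\subset M_K$ of slope $r=a/b$ to a surface in the cable space $\C$ whose boundary data is forced homologically --- is the right one (this theorem is only quoted here from \cite{Effie-Anh-slope}, where the argument runs along these lines), and your boundary bookkeeping (outer class $qN(a\mu+b\lambda)$, inner class $N(q^2a\mu'+b\lambda')$, hence slope $q^2r$) is correct. The genuine gap is the essentiality of the surface $F\subset\C$ that you build as an oriented cut-and-paste of $|bN|$ copies of the vertical annulus $A$ and $|(qa-pb)N|$ copies of the horizontal surface $D'$. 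Essentiality of a Haken sum does not follow from essentiality of its summands, and your stated justification (``parallel copies of such surfaces remain essential'') only covers the degenerate cases $b=0$ or $qa-pb=0$, where one multiplicity vanishes. When both multiplicities are nonzero, $F$ is by construction neither vertical nor horizontal; since $\C$ is an irreducible Seifert fibered space, every incompressible and $\bdy$-incompressible surface in it is isotopic to a vertical or a horizontal one, so to use $F$ you would have to prove it can be isotoped to a horizontal surface --- precisely the step that is missing. Without incompressibility and $\bdy$-incompressibility of the piece in $\C$, the ``standard innermost-disk argument across $T$'' has no hypotheses to run on, so the essentiality of $\hat S$ is not established.

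The standard repair is to discard the cut-and-paste and instead take a single horizontal surface in $\C$ with the prescribed boundary: for the Seifert fibration of $\C$ over the annulus with one cone point of order $q$, a horizontal surface exists for any boundary data that is null-homologous in $\C$ and has the same intersection number $d$ with the fiber on each boundary torus, provided $q\mid d$; your computation shows the required data (outer: $qN$ coherently oriented curves of slope $a/b$; inner: slope $q^2a/b$) satisfies these conditions with $d=qN|qa-pb|$. Horizontal surfaces are automatically essential, and then the gluing argument works as you describe, using $\bdy$-incompressibility of both pieces along $T$ and incompressibility of $T$ (so $K$ nontrivial; the unknot case, where $q^2bs_K=\{0\}$ is realized by the fiber surface of the torus knot, must be noted separately). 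Two smaller points: incompressibility of $T_0$ only gives incompressibility of your $pq$-annulus, not that it fails to be $\bdy$-parallel, so non-$\bdy$-parallelism needs its own (easy, but necessary) argument; and the degenerate slopes $r=1/0$ and $a/b=p/q$ should be treated by the purely horizontal, respectively purely vertical, versions of the construction rather than by the mixed one.
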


\no{
Here we study knots for which $d_+[J_K(n)]$ has a unique  Jones slope. We have provided conditions under which
if a number $a\in \QQ$ is a Jones slope of a mono-sloped knot $K$ then $q^2a$ is a Jones slopes of the cable knot $\K$.}

Proposition \ref{quasi-constant} and Theorem \ref{bdry slope} imply Theorem \ref{thm-quasi-constant} stated in the introduction.

\begin{example}  

\label{3-knots}

 Theorem \ref{thm-quasi-constant} applies to the non-alternating knots $8_{20}, 9_{43}, 9_{44}$ (of period $3$) in Section 4 of \cite{Ga-slope}. 
 
 For $K=8_{20}$ we have
 \begin{eqnarray*}
d_+[J_K(n)] &=& \begin{cases} 2n^2/3 - n/2 - 1/6 &\mbox{if } n \not\equiv 0 \pmod{3}\\ 
2n^2/3 - 5n/6 -1/2 & \mbox{if } n \equiv 0 \pmod{3}.\end{cases} 
\end{eqnarray*}
Hence $K_{p,q}$ satisfies Conjectures \ref{conj}  and  \ref{slope} if $p-\frac{7}{3}q<0$ or $p-3q > 0$.

For $K=9_{43}$ we have
 \begin{eqnarray*}
d_+[J_K(n)] &=& \begin{cases} 8n^2/3 - n/2 - 13/6 &\mbox{if } n \not\equiv 0 \pmod{3}\\ 
8n^2/3 - 5n/6 -7/2 & \mbox{if } n \equiv 0 \pmod{3}.\end{cases}
\end{eqnarray*}
Hence $K_{p,q}$ satisfies Conjectures \ref{conj}  and  \ref{slope} if $p-\frac{31}{3}q<0$ or $p - 11q>\frac{2}{3}$.

For $K=9_{44}$ we have
\begin{eqnarray*}
d_+[J_K(n)] &=& \begin{cases} 7n^2/6 - n - 1/6 &\mbox{if } n \not\equiv 0 \pmod{3}\\ 
7n^2/6 - 4n/3 -1/2 & \mbox{if } n \equiv 0 \pmod{3}.\end{cases}
\end{eqnarray*}
Hence $K_{p,q}$ satisfies Conjectures \ref{conj}  and  \ref{slope} if $p-\frac{13}{3}q<0$ or $p - 5q > 0 $.
\end{example}

\section{Two-fusion knots}

The family of 2-fusion knots is a two-parameter family  of closed 3-braids denoted
by
$$\{ K(m_1, m_2)  \mid  m_1, m_2\in \ZZ\}.$$
For the precise definition and description see \cite {GV, DG}. 

Note that $K(m_1, m_2)$ is a torus knot if $m_2 \in \{-1,0\}$. In fact, $K(m_1, 0) = T(2, 2m_1 + 1)$ and $K(m_1, -1) = T(2, 2m_1 -3)$. It is known that $K(m_1, m_2)$ is hyperbolic if $m_1 \notin \{0, 1\}$, $m_2 \notin \{-1,0\}$ and $(m_1, m_2) \not= (-1,1)$. See \cite{GV}. Note that $K(-1,1) = T(2,5)$.

From now on we consider $m_2 \notin \{-1,0\}$ only. Let 
\begin{eqnarray*}
I_1 &=& \{(m_1, m_2) : m_2 \ge 1 \text{ and } m_1 \ge 2\}, \smallskip \\
I_2 &=& \{(m_1, m_2) : m_2 \ge 1 \text{ and } m_1 = 1\}, \smallskip \\
I_3 &=& \{(m_1, m_2) : m_2 \ge 1 \text{ and } m_1  \le -(m_2 + 1)\} \\
&& \cup \, \{(m_1, m_2) : m_2 \ge 1 \text{ and } -(m_2 + 1)/2 > m_1 > -(m_2 + 1)\},\smallskip\\
I_4 &=& \{(m_1, m_2) : m_2 \ge 1 \text{ and } 0 \ge m_1 \ge - (m_2 + 1)/2\}, \smallskip\\
I_5 &=& \{(m_1, m_2) : m_2 \le -2 \text{ and }  m_1 \le -3m_2/2\}, \smallskip\\
I_6 &=& \{(m_1, m_2) : m_2 \le -2 \text{ and } m_1 > -3m_2/2\}.
\end{eqnarray*}
and let $\CS_{m_1, m_2}$ be the set of coprime pairs $(p,q)$, with $q>1$, such that

\begin{itemize}

\item  If $ (m_1, m_2) \in I_1$, then   $p- \left( 4m_1 + 8m_2 + 2 + \frac{m_2^2}{ m_1 + m_2 - 1} \right) q < 0$ or 

$p- \left( 4m_1 + 8m_2 + 2 + \frac{m_2^2}{ m_1 + m_2 - 1} \right) q > \frac{(1 - m_1 + m_2)^2}{4(m_1 + m_2 -1)}$.

\smallskip

\item  If $ (m_1, m_2) \in I_2$,  then $p- ( 9 m_2 + 6) q > \frac{3m_2}{4} \mbox{ or } p- ( 9 m_2 + 6) q < 0$.



\smallskip

\item If $ (m_1, m_2) \in I_4$, then $p-\left( 3m_1 + 9m_2 + 3 + \frac{m_1^2}{m_1 + m_2+1} \right) q < 0$ or 

$p- \left( 3m_1 + 9m_2 + 3 + \frac{m_1^2}{m_1 + m_2+1} \right) q > \max\{  \frac{m_1 + m_2 -1}{4} + \frac{m_1 (m_2 - 1)}{m_1 + m_2 + 1}, 0\}$.



\smallskip

\item If $(m_1, m_2) \in I_6$, then
$p- \frac {2(2 m_1 + 3 m_2)^2 + m_2 + 1} {2 m_1 + 2 m_2-1}  q < 0$ or

$p- \frac {2(2 m_1 + 3 m_2)^2 - (m_2 + 1)} {2 m_1 + 2 m_2-1}  q > \max\{ \frac{2m_1 + 2m_2 -1}{8} + \frac{(2m_1-6) (m_2+1)}{2m_1 + 2m_2 - 1}, 0\}$.
\end{itemize}

\begin{theorem}
\label{thm:slope-cable} Let $K=K(m_1, m_2)$ be a  2-fusion knot. Then, 

\begin{enumerate}
\item Conjectures  \ref{conj} and \ref{slope}  hold  for $K$.
\item Conjectures \ref{conj} and \ref{slope} hold true for $(p,q)$-cables of the 2-fusion knot $K(m_1, m_2)$, where $(p,q) \in \CS_{m_1,m_2}$.
\end{enumerate}
\end{theorem}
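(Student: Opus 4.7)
The plan is to reduce everything to an explicit computation of $d_+[J_K(n)]$ for 2-fusion knots, and then feed that data into Corollary \ref{cables}. The bulk of the work is thus bookkeeping, not new ideas: the real input is that $d_+[J_K(n)]$ is already understood case-by-case in the six regions $I_1,\dots,I_6$.

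First, for part (1), I would invoke the results of Garoufalidis--van der Veen \cite{GV} and Dunfield--Garoufalidis \cite{DG}, which verify the Slope Conjecture for all 2-fusion knots $K(m_1,m_2)$ and, more importantly for us, give closed-form quadratic quasi-polynomial expressions for $d_+[J_K(n)]$ (passing to the mirror image whenever $m_2 \le -2$ so that we work with the top degree). For each region $I_i$, I would read off the leading coefficient $a$, the period, and the periodic linear/constant terms $b(n), d(n)$. For instance in $I_1$ one has $4a = 4m_1 + 8m_2 + 2 + \tfrac{m_2^2}{m_1+m_2-1}$, which is exactly the expression appearing in the definition of $\CS_{m_1,m_2}$. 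A direct inspection shows that in every region $b(n) \le 0$ with $b(n) = 0$ occurring only in regions corresponding to torus knots or cable knots (i.e., degenerations of the construction); this verifies Conjecture \ref{conj} for $K$.

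For part (2), the plan is to apply Corollary \ref{cables} one region at a time. In each $I_i$ I would compute the constants
\[
M_1 = \max\{|b(i) - b(j)| : i \equiv j \pmod 2\}, \qquad
M_2 = \max\{2b(i) + |b(i) - b(j)| + |d(i) - d(j)| : i \equiv j \pmod 2\},
\]
using the explicit formulas for $b(n)$ and $d(n)$. The claim is then that the pairs appearing in the definition of $\CS_{m_1,m_2}$ are exactly those satisfying
\[
p - (4a - M_1)q < 0 \quad \text{or} \quad p - (4a + M_1)q > \max\{0, M_2\},
\]
so Corollary \ref{cables} immediately gives Conjectures \ref{conj} and \ref{slope} for $\K$. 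The regions $I_3$ and $I_5$, which do not appear in $\CS_{m_1,m_2}$, are already covered by \cite{Effie-Anh-slope}: in those regions the linear coefficient $b(n)$ is constant (period one or two), so the earlier period-$\le 2$ results apply to all coprime $(p,q)$ with $q > 1$ without any restriction. Collecting these cases finishes part (2).

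The main obstacle I expect is purely computational: matching the piecewise data of $d_+[J_K(n)]$ from \cite{GV} in each of the six regions to the precise arithmetic expressions for $4a \pm M_1$ and $M_2$ that appear in the definition of $\CS_{m_1,m_2}$. In particular, some regions have period $> 2$ with more than two residue classes modulo $2$ interacting in $M_1$ and $M_2$, so one must be careful that the maxima are taken over pairs with $i \equiv j \pmod 2$ rather than over all pairs. Once the quasi-polynomial is tabulated for each $I_i$, the verification of the inequalities reduces to an algebraic check in each case, and Theorem \ref{bdry slope} together with Proposition \ref{quasi-constant} then deliver both conjectures simultaneously.
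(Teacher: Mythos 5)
Your proposal follows essentially the same route as the paper: the paper likewise extracts $b_K(n)$ and the constant term from the $\delta_K$ formulas of \cite{GV} (Theorem \ref{prop:b}), identifies the $b_K(n)=0$ cases as torus knots, computes $M_1$ and $\max\{0,M_2\}$ region by region (Lemma \ref{M}), and then applies Theorem \ref{thm-quasi-constant} (equivalently Corollary \ref{cables}) together with Theorem \ref{a_K}. The only cosmetic difference is your treatment of $I_3$ and $I_5$ via the period-$\le 2$ results of \cite{Effie-Anh-slope}; the paper covers these regions with the same cabling theorem, since there $M_1=0=\max\{0,M_2\}$ and $4a\in\BZ$, so coprimality with $q>1$ makes the hypothesis $p-4aq\neq 0$ automatic.
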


Due to different conventions and normalizations of the colored Jones polynomial, the degree $d_+[J_K(n)]$ in our paper is different from $\delta_K(n)$ in \cite{GV}. For $n>0$ we have $$d_+[J_K(n)] =  \delta_K(n-1) + (n-1)/2.$$


For $n \in \BN$ and $k_1, k_2 \in \BZ$ such that $0 \le k_1 \le n$ and $|n-2k_1| \le n+2k_2 \le n+2k_1$, let
\begin{eqnarray*}
Q(n,k_1,k_2)&=& \frac{k_1}{2} - \frac{3 k_1^2}{2} - 3 k_1 k_2 - k_2^2 - k_1 m_1 - k_1^2 m_1 - k_2 m_2 - k_2^2 m_2 - 6 k_1 n \\ 
& & - \, 3 k_2 n + 2 m_1 n + 4 m_2 n - k_2 m_2 n - 2 n^2 + m_1 n^2 + 2 m_2 n^2 \\ 
& & + \, \frac{1}{2} \left( (1 + 8 k_1 + 4 k_2 + 8 n) \min\{l_1,l_2,l_3\} - 3 \min\{l_1,l_2,l_3\}^2\right)
\end{eqnarray*}
where
$$
l_1=2 k_1 + n, \qquad l_2=2 k_1 + k_2 + n, \qquad l_3=k_2 + 2 n.
$$

\subsection{ The highest degree and the Jones slope.} The quantity  $Q(n,k_1,k_2)$ is closely related
to $\delta_K(n)$. According to \cite{GV}  for the 2-fusion knot $K=K(m_1,m_2)$, with $m_2 \notin \{-1, 0\}$, we have the following possibilities:
\vskip 0.04in

\textbf{Case A.} $m_1, m_2 \ge 1$. Then
$$\delta_K(n) = Q(n,k_1,-k_1),$$

\noindent where $$c_1=\frac{1 - m_1 + m_2 + m_2 n}{2 (-1 + m_1 + m_2)},$$ 
and  $k_1$ is of the integers closest to $c_1$ satisfying $k_1 \le n/2$. 

\vskip 0.04in

\textbf{Case B.} $m_1 \le 0, m_2 \ge 1$. There are 2 subcases.

\textbf{(B-1)} ($1 + m_1 + m_2 \le 0$) or ($1 + m_1 + m_2 > 0$ and $1 + 2m_1 + m_2 < 0$). Then
$$\delta_K(n) = Q(n,n,0).$$

\textbf{(B-2)} $1 + m_1 + m_2 > 0$ and $1 + 2m_1 + m_2 \ge 0$. 
 Then
$$\delta_K(n) = Q(n,k_1,k_1-n),$$
\noindent where  $$c_2=\frac{1 - m_1 - m_2 + (1 + m_2) n}{2 (1 + m_1 + m_2)}.$$
and $k_1$ is one of the integers closest to $c_2$.

\textbf{Case C.} $m_2 \le -2$. There are 2 subcases.

\textbf{(C-1)} $m_1 \le -3m_2/2$. Then 
$$\delta_K(n) = Q(n,n,n).$$

\textbf{(C-2)} $m_1 > -3m_2/2$. Let $$c_3=\frac{-3/2 + m_1 + m_2 + (1+m_2 ) n}{1-2m_1-2m_2}$$
and let $k_1$ be one of the integers closest to $c_3$. Then
$$\delta_K(n) = \begin{cases} Q(n,k_1,k_1) &\mbox{if } c_3 \notin \frac{1}{2} + \BZ \\ 
Q(n,k_1,k_1)-(c_3 + 1/2) & \mbox{if } c_3 \in \frac{1}{2} + \BZ \end{cases}$$

We need the following that gives the Jones slopes of 2-fusion knots.

\begin{theorem} \label{a_K}
\cite{DG, GV}
The slope conjecture holds for 2-fusion knot $K=K(m_1,m_2)$. Moreover, we have
$$
 a_K(n) = \begin{cases} m_1 + 2m_2 + \frac{1}{2} + \frac{m_2^2}{4(-1 + m_1 + m_2)} &\mbox{if } (m_1, m_2) \in I_1 \cup I_2, 
\smallskip
\\ 1/2 + 2 m_2   & \mbox{if } (m_1, m_2) \in I_3, 
\smallskip
\\ \frac{3 + 3m_1 + 9m_2}{4} +  \frac{m_1^2}{4(1 + m_1 + m_2)} & \mbox{if } (m_1, m_2) \in I_4, 
\smallskip
\\ 0  & \mbox{if } (m_1, m_2) \in I_5, 
\smallskip
\\ \frac {(2 m_1 + 3 m_2)^2} {2 (-1 + 2 m_1 + 2 m_2)} & \mbox{if } (m_1, m_2) \in I_6.
\end{cases}
$$
\end{theorem}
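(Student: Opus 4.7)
The Slope Conjecture half of this theorem was established in \cite{DG, GV} by constructing explicit spun-normal surfaces in the (hyperbolic) 2-fusion knot complements and computing their boundary slopes, so for that part the plan is simply to quote those computations. The remaining task is to verify the closed-form formula for $a_K(n)$, and my plan is to extract the $n^2$-coefficient from the piecewise expression for $\delta_K(n)$ recalled above, using the normalization identity
\[
d_+[J_K(n)] = \delta_K(n-1) + (n-1)/2,
\]
which implies that $a_K(n)$ equals the leading $n^2$-coefficient of the quasi-polynomial $\delta_K(n)$.

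I would then work through the five regions in sequence. In Cases A, B-2, C-2 (corresponding to $I_1 \cup I_2$, $I_4$, and $I_6$), the formula for $\delta_K(n)$ is $Q(n, k_1, k_2)$ with $k_2$ expressed as a simple affine function of $k_1$ and $n$, and $k_1$ chosen to be the integer nearest an explicit $c_i = c_i(n)$; in Cases B-1 ($I_3$) and C-1 ($I_5$) the pair $(k_1, k_2)$ is a fixed function of $n$. In each region the computation proceeds in three steps: first, identify which of $l_1, l_2, l_3$ realizes the minimum for $n \gg 0$ (the governing inequalities are sign conditions on $1 + m_1 + m_2$, $1 + 2m_1 + m_2$, and $2m_1 + 3m_2$, precisely the quantities that delimit the regions); second, expand $Q$ as a polynomial in $k_1$ and $n$; and third, when $k_1$ is a free parameter, observe that the resulting expression is concave-down quadratic in $k_1$ and maximize at $k_1 = c_i$. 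The leading $n^2$-coefficient then follows by direct substitution, giving $a_K = 0$ in Case C-1 (where the $n^2$-terms cancel completely) and the stated rational functions of $m_1, m_2$ in the other cases. In Cases B-2 and C-2 the answer first appears in the forms $\tfrac{1}{2} + m_1 + 2 m_2 + \tfrac{(1+m_2)^2}{4(1+m_1+m_2)}$ and $\tfrac{1}{2} + m_1 + 2 m_2 + \tfrac{(1+m_2)^2}{2(2m_1+2m_2-1)}$ respectively, and these can be reduced to the theorem's compact form using the identities
\[
(1+m_2)^2 - m_1^2 = (1+m_1+m_2)(1+m_2-m_1)
\]
and
\[
(2m_1 + 3m_2)^2 - (1+m_2)^2 = (2m_1+2m_2-1)(2m_1+4m_2+1).
\]

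The main obstacle is bookkeeping rather than conceptual: one must verify the correct minimizer of $\{l_1, l_2, l_3\}$ in each region, and check that the integer-rounding of $k_1$ to $c_i$ together with the parity dependence of the three pieces affects only sub-leading terms, so that $a_K(n)$ is genuinely constant on each region. These sub-leading contributions are precisely what makes $b_K(n)$ and $d_K(n)$ periodic in $n$, and they will be needed in the sequel in order to compute the constants $M_1, M_2$ of Theorem \ref{thm-quasi-constant} that enter the cabling conditions defining $\CS_{m_1, m_2}$ in Theorem \ref{thm:slope-cable}.
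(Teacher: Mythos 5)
Your proposal is correct and matches how this result is handled here: the paper states Theorem \ref{a_K} as a quoted result of \cite{DG, GV} (no independent proof is given), and your extraction of the $n^2$-coefficient from the piecewise formulas for $\delta_K(n)$ is exactly the computation that appears, case by case, in the paper's proof of Theorem \ref{prop:b}, whose displayed expressions for $d_+[J_K(n)]$ have precisely the stated leading coefficients on $I_1\cup I_2$, $I_3$, $I_4$, $I_5$, $I_6$. Your two simplification identities for Cases B-2 and C-2 are algebraically correct, and the rounding of $k_1$ to the nearest integer to $c_i$ indeed only perturbs the sub-quadratic terms, so the argument goes through as you outline.
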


\subsection{Calculating the linear term}  

\begin{theorem}

\label{prop:b}

For the 2-fusion knot $K=K(m_1,m_2)$, with $m_2 \notin \{-1,0\}$, we have
$$
b_K(n) = \begin{cases} \frac{m_2(1-m_1)}{2(-1+m_1+m_2)} &\mbox{if } (m_1, m_2) \in I_1 \cup I_2, 
\smallskip
\\ 1 + m_1  & \mbox{if } (m_1, m_2) \in I_3, 
\smallskip
\\ \frac{m_1(m_2-1)}{2(1+m_1+m_2)}   & \mbox{if } (m_1, m_2) \in I_4, 
\medskip
\\ 5/2 + m_1 + 3 m_2  & \mbox{if } (m_1, m_2) \in I_5, 
\smallskip
\\ \frac {(-5 + 2m_1)(1 + m_2)} {2 (-1 + 2 m_1 + 2 m_2)} & \mbox{if } (m_1, m_2) \in I_6 \mbox{ and } \frac {-1+(1 + m_2)(n-1)} {-1 + 2 m_1 + 2 m_2} \notin  \BZ,
\medskip
\\ \frac {(-3 + 2m_1)(1 + m_2)} {2 (-1 + 2 m_1 + 2 m_2)} & \mbox{if } (m_1, m_2) \in I_6\mbox{ and } \frac {-1+(1 + m_2)(n-1)} {-1 + 2 m_1 + 2 m_2} \in  \BZ.
\end{cases}
$$
In particular we have $b_K(n) \le 0$. Moreover $b_K(n) = 0$ if and only if $m_1 \in \{0,1\}$ and $m_2 \ge 1$, or $(m_1, m_2) = (-1,1)$.
\end{theorem}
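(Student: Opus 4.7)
The strategy is to substitute the values of $k_1, k_2$ provided in each case of \cite{GV} into the explicit formula for $Q(n, k_1, k_2)$, expand as a quadratic quasi-polynomial in $n$, and read off the coefficient of $n$. We must account for the shift $d_+[J_K(n)] = \delta_K(n-1) + (n-1)/2$, so we evaluate $Q(n-1,\cdot,\cdot)$ and pick up an extra $+\tfrac{1}{2}$ in $b_K(n)$ at the end.

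In cases B-1 ($I_3$) and C-1 ($I_5$), the values of $k_1, k_2$ are exact linear functions of $n$, so $Q$ is an honest quadratic polynomial. One first pins down $\min\{l_1,l_2,l_3\}$ by a short sign check using the defining inequalities on $(m_1, m_2)$; expanding then yields the constant values $b_K(n) = 1 + m_1$ and $b_K(n) = 5/2 + m_1 + 3m_2$ respectively.

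In cases A ($I_1 \cup I_2$), B-2 ($I_4$) and the generic branch of C-2 ($I_6$), the index $k_1$ is the nearest integer to a real-valued linear function $c_i(n) = \alpha_i n + \beta_i$. Writing $k_1 = c_i(n) + \epsilon_i(n)$ with $\epsilon_i$ a bounded periodic rounding error, the key observation is that $c_i$ is by construction the critical point of the restriction of $Q$ to the line $k_2 = \pm k_1$ or $k_2 = k_1 - n$. Thus if we expand this restriction as $a_{00} + a_{10} n + a_{20} n^2 + a_{01} k_1 + a_{11} n k_1 + a_{02} k_1^2$, the critical-point equation forces $a_{11} + 2 a_{02}\alpha_i = 0$. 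Substituting $k_1 = \alpha_i n + \beta_i + \epsilon_i$ then makes the $\epsilon_i$-dependent part of the coefficient of $n$, namely $(a_{11} + 2 a_{02}\alpha_i)\epsilon_i$, vanish identically. Hence the rounding error contributes only to the constant term $d_K(n)$, and a direct simplification of $a_{10} + a_{01}\alpha_i + a_{11}\beta_i + 2 a_{02}\alpha_i\beta_i + \tfrac{1}{2}$ yields the stated formulas $\frac{m_2(1-m_1)}{2(m_1+m_2-1)}$, $\frac{m_1(m_2-1)}{2(m_1+m_2+1)}$ and $\frac{(2m_1-5)(1+m_2)}{2(2m_1+2m_2-1)}$. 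As before, identifying $\min\{l_1, l_2, l_3\}$ requires a sign check, but the defining inequalities cutting out $I_1, I_2, I_4, I_6$ are exactly what forces a single minimum consistently for all large $n$.

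The second branch in $I_6$ comes from the piecewise definition of $\delta_K$ in case C-2: when $c_3 \in \tfrac{1}{2} + \BZ$ one subtracts $(c_3 + 1/2)$. After the shift $n \mapsto n-1$, the half-integrality of $c_3(n-1)$ simplifies to $\frac{-1+(1+m_2)(n-1)}{-1+2m_1+2m_2} \in \BZ$, and the linear-in-$n$ part of $-(c_3(n-1) + 1/2)$ contributes an extra $\frac{1+m_2}{2m_1+2m_2-1}$ to the coefficient of $n$; adding this to the generic value gives $\frac{(2m_1-3)(1+m_2)}{2(2m_1+2m_2-1)}$. Finally, the inequality $b_K(n) \le 0$ is an elementary region-by-region sign check: in $I_1 \cup I_2$ we have $1 - m_1 \le 0$, $m_2 \ge 1$ and $m_1+m_2-1 \ge 1$; in $I_3$ we have $1 + m_1 \le 0$ since $m_1 \le -(m_2+1)/2 \le -1$; and similarly for the remaining regions. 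Tracking when each numerator vanishes identifies the equality locus as exactly $m_1 \in \{0,1\}$ with $m_2 \ge 1$ or $(m_1, m_2) = (-1, 1)$. The main obstacle is book-keeping: pinning down $\min\{l_1,l_2,l_3\}$ correctly in each region and carrying the periodic rounding error through $Q$ without arithmetic slips.
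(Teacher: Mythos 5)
Your proposal is correct and follows essentially the same route as the paper's proof: restrict $Q$ to the relevant line, write $k_1$ as the critical value $c_i$ plus a bounded periodic rounding error (whose linear contribution vanishes exactly because $c_i$ is the vertex, so it only perturbs the constant term), apply the shift $d_+[J_K(n)]=\delta_K(n-1)+(n-1)/2$, treat the extra half-integer branch in case C-2 separately, and finish with region-by-region sign checks. The only differences are cosmetic: you make the critical-point cancellation explicit via $a_{11}+2a_{02}\alpha_i=0$, whereas the paper simply records $Q(n,c_i+r_n,\cdot)=Q(n,c_i,\cdot)+(\text{coeff})\,r_n^2$, and you flag the determination of $\min\{l_1,l_2,l_3\}$ which the paper carries out implicitly.
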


\begin{proof} As in the previous subsection, there are 3 cases.

\vskip 0.04in

\textbf{Case A.} $m_1, m_2 \ge 1$. Recall that $$c_1=\frac{1 - m_1 + m_2 + m_2 n}{2 (-1 + m_1 + m_2)}$$ 
and $k_1$ is one of the integers closest to $c_1$ satisfying $k_1 \le \frac{n}{2}$. We have
\begin{eqnarray*}
\delta_K(n) = Q(n,k_1,-k_1)
&=& (1-m_1-m_2) k_1^2 + ( 1 - m_1 + m_2 + m_2 n) k_1 \\
&& + \, 2 m_1 n + 4 m_2 n  + m_1 n^2 + 2 m_2 n^2 + \frac{n}{2} + \frac{n^2}{2}.
\end{eqnarray*}

Write $k_1 = c_1 + r_n$ where $r_n$ is a periodic sequence with $\begin{cases} |r_n| \le 1/2 &\mbox{if } m_1 \ge 2, 
\\ r_n \in \{-1/2, -1\}  & \mbox{if } m_1 = 1. 
\end{cases}$ We have
$$
\delta_K(n) = Q(n,c_1+r_n,-c_1-r_n) = Q(n,c_1,-c_1) + (1-m_1-m_2) r_n^2
$$
and
\begin{eqnarray*}
Q(n,c_1,-c_1) &=& 2 m_1 n + 4 m_2 n  + m_1 n^2 + 2 m_2 n^2 + \frac{n}{2} + \frac{n^2}{2} -\frac{( 1 - m_1 + m_2 + m_2 n)^2}{4(1-m_1-m_2)} \\
&=& \left( m_1 + 2m_2 + \frac{1}{2} + \frac{m_2^2}{4(-1 + m_1 + m_2)} \right) n^2 \\
&& + \,  \left( 2m_1 + 4m_2 + \frac{1}{2} + \frac{m_2(1-m_1+m_2)}{2(-1 + m_1 + m_2)} \right) n - \frac{( 1 - m_1 + m_2)^2}{4(1-m_1-m_2)}.
\end{eqnarray*} 

Since $d_+[J_K(n)] = \delta_K(n-1) + (n-1)/2$ we obtain
\begin{eqnarray*}
d_+[J_K(n)] &=& \left( m_1 + 2m_2 + \frac{1}{2} + \frac{m_2^2}{4(-1 + m_1 + m_2)} \right) n^2 + \frac{m_2(1-m_1)}{2(-1+m_1+m_2)} n\\
&& - \, \left( m_1 + 2m_2 + \frac{1}{2} - \frac{(1-m_1)^2}{4(-1 + m_1 + m_2)} \right) + (1-m_1-m_2) r^2_{n-1}.
\end{eqnarray*} 

\textbf{Case B.} $m_1 \le 0, m_2 \ge 1$.  There are 2 subcases.

\textbf{(B-1)} ($1 + m_1 + m_2 \le 0$) or ($1 + m_1 + m_2 > 0$ and $1 + 2m_1 + m_2 < 0$). Then
$$\delta_K(n) = Q(n,n,0)=\left( \frac{1}{2} + 2 m_2 \right) n^2 + \left( \frac{3}{2} + m_1 + 4 m_2 \right) n.$$
Hence $$
d_+[J_K(n)] = \left( \frac{1}{2} + 2 m_2 \right) n^2 + (1 + m_1) n - \left( \frac{3}{2} + m_1 + 2 m_2 \right).
$$

\textbf{(B-2)} $1 + m_1 + m_2 > 0$ and $1 + 2m_1 + m_2 \ge 0$. Recall that $$c_2=\frac{1 - m_1 - m_2 + (1 + m_2) n}{2 (1 + m_1 + m_2)}$$
and $k_1$ is one of the integers closest to $c_2$. We have 
\begin{eqnarray*}
\delta_K(n) = Q(n,k_1,k_1-n)
&=& (-1-m_1-m_2) k_1^2 + ( 1 - m_1 - m_2 + (1 + m_2) n) k_1 \\
&& + \, 2 m_1 n + 5 m_2 n  + m_1 n^2 + 2 m_2 n^2 + \frac{n}{2} + \frac{n^2}{2}.
\end{eqnarray*}
Write $k_1 = c_2 + r_n$ where $r_n$ is a periodic sequence with $|r_n| \le 1/2$.  As in Case A we have 
$$\delta_K(n) = Q(n,c_2,c_2-n) + (-1-m_1-m_2)r_n^2$$
and
\begin{eqnarray*}
Q(n,c_2,c_2-n) &=&  2 m_1 n + 5 m_2 n  + m_1 n^2 + 2 m_2 n^2 + \frac{n}{2} + \frac{n^2}{2} - \frac{( 1 - m_1 - m_2 + (1 + m_2) n)^2}{4(-1-m_1-m_2)} \\
&=& \left( \frac{3}{4} + \frac{3m_1}{4} + \frac{9m_2}{4} +  \frac{m_1^2}{4(1 + m_1 + m_2)} \right) n^2  \\
&& + \,  \left( 1 + 2m_1 + \frac{9m_2}{2}  - \frac{m_1}{1 + m_1 + m_2} \right) n - \frac{( 1 - m_1 - m_2)^2}{4(-1-m_1-m_2)}.
\end{eqnarray*} 
Hence 
\begin{eqnarray*}
d_+[J_K(n)] &=& \left( \frac{3}{4} + \frac{3m_1}{4} + \frac{9m_2}{4} +  \frac{m_1^2}{4(1 + m_1 + m_2)} \right) n^2 + \frac{m_1(m_2-1)}{2(1+m_1+m_2)} n\\
&& - \,  \left( \frac{3}{4} + \frac{3m_1}{4} + \frac{9m_2}{4} - \frac{(m_2 - 1)^2}{4(1 + m_1 + m_2)}\right) + (-1-m_1-m_2)r_n^2.
\end{eqnarray*}

\textbf{Case C.} $m_2 \le -2$. There are 2 subcases.

\textbf{(C-1)} $m_1 \le -3m_2/2$. Then 
$$\delta_K(n) = Q(n,n,n)=(2 + m_1 + 3 m_2) n.$$
Hence $$d_+[J_K(n)] = (5/2 + m_1 + 3 m_2) (n-1).$$ 

\textbf{(C-2)} $m_1 > -3m_2/2$. Recall that $$c_3=\frac{-3/2 + m_1 + m_2 + (1+m_2 ) n}{1-2m_1-2m_2}$$
and let $k_1$ be one of the integers closest to $c_3$. We have
$$\delta_K(n) = \begin{cases} Q(n,k_1,k_1) &\mbox{if } c_3 \notin \frac{1}{2} + \BZ \\ 
Q(n,k_1,k_1)-(c_3 + 1/2) & \mbox{if } c_3 \in \frac{1}{2} + \BZ \end{cases}$$
and
\begin{eqnarray*}
Q(n,k_1,k_1)
&=& \left( 1/2-m_1-m_2 \right) k_1^2 - \left( -3/2 + m_1 + m_2 + (1+m_2 ) n \right) k_1 \\
&& + \, 2 m_1 n + 4 m_2 n  + m_1 n^2 + 2 m_2 n^2 + \frac{n}{2} + \frac{n^2}{2}.
\end{eqnarray*}

Write $k_1 = c_3 + r_n$ where $r_n$ is a periodic sequence with $|r_n| \le 1/2$. As in Case A we have 
$$Q(n,k_1,k_1) = Q(n,c_3,c_3) + \left( 1/2-m_1-m_2 \right) r_n^2$$
and
\begin{eqnarray*}
Q(n,c_3,c_3) &=&  2 m_1 n + 4 m_2 n  + m_1 n^2 + 2 m_2 n^2 + \frac{n}{2} + \frac{n^2}{2} - \frac{\left( -3/2 + m_1 + m_2 + (1+m_2 ) n \right)^2}{4(1/2-m_1-m_2)} \\
&=& \frac {(2 m_1 + 3 m_2)^2} {2 (-1 + 2 m_1 + 2 m_2)} n^2 + \left( \frac{1}{2} + 2 m_1 + \frac{9m_2}{2} + \frac {-3 + 2 m_1} {2 (-1 + 2 m_1 + 2 m_2)} \right) n\\
&& - \, \frac{\left( -3/2 + m_1 + m_2  \right)^2}{4(1/2-m_1-m_2)} .
\end{eqnarray*} 
Hence 
\begin{eqnarray*}
d_+[J_K(n)] &=& \frac {(2 m_1 + 3 m_2)^2} {2 (-1 + 2 m_1 + 2 m_2)} n^2 + 
\begin{cases} \frac {(-5 + 2m_1)(1 + m_2)} {2 (-1 + 2 m_1 + 2 m_2)} n &\mbox{if } \frac {-1+(1 + m_2)(n-1)} {-1 + 2 m_1 + 2 m_2}  \notin \BZ \medskip \\ 
\frac {(-3 + 2 m_1)(1 + m_2)} {2(-1 + 2 m_1 + 2 m_2)}n & \mbox{if } \frac {-1+(1 + m_2)(n-1)} {-1 + 2 m_1 + 2 m_2}  \in  \BZ \end{cases} \\
&& - \, \left( \frac{1}{2} + m_1 + 2m_2 - \frac{\left( 2m_1 -5  \right)^2}{8(2m_1 + 2m_2 -1)} \right) + \left( 1/2-m_1-m_2 \right) r_{n-1}^2.
\end{eqnarray*}
This completes the proof of Theorem \ref{prop:b}.
\end{proof}

The proof of Theorem \ref{prop:b} implies the following.

\begin{lemma} \label{M}
For the 2-fusion knot $K=K(m_1, m_2)$ we have
$$
M_1 = \begin{cases}  0  & \mbox{if } (m_1, m_2) \notin I_6, 
\\ \frac {1 + m_2} {-1 + 2 m_1 + 2 m_2} & \mbox{if } (m_1, m_2) \in I_6,
\end{cases}
$$
and 
$$
\max\{0, M_2\} = \begin{cases} \frac{(1 - m_1 + m_2)^2}{4(m_1 + m_2 -1)} &\mbox{if } (m_1, m_2) \in I_1, 
\smallskip\\
3m_2/4 &\mbox{if } (m_1, m_2) \in I_2, 
\smallskip
\\ 0  & \mbox{if } (m_1, m_2) \in I_3, 
\smallskip
\\ \max \big\{  \frac{m_1 + m_2 -1}{4} + \frac{m_1 (m_2 - 1)}{m_1 + m_2 + 1}, 0 \big\}   & \mbox{if } (m_1, m_2) \in I_4, 
\smallskip
\\ 0  & \mbox{if } (m_1, m_2) \in I_5, 
\smallskip
\\ \max \big\{ \frac{2m_1 + 2m_2 -1}{8} + \frac{(2m_1-6) (m_2+1)}{2m_1 + 2m_2 - 1}, 0 \big\} & \mbox{if } (m_1, m_2) \in I_6.
\end{cases}
$$
\end{lemma}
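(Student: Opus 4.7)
The plan is to extract $b_K(n)$ and $d_K(n)$ directly from the formulas obtained in the proof of Theorem \ref{prop:b}, and then compute $M_1$ and $M_2$ region by region over the six subsets $I_1,\ldots,I_6$.

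For $M_1$, Theorem \ref{prop:b} expresses $b_K(n)$ as a single rational expression in $m_1,m_2$ with no $n$-dependence in each of $I_1,\ldots,I_5$, so $|b(i)-b(j)|=0$ identically and $M_1=0$ there. In $I_6$ the function $b_K(n)$ takes one of two values depending on whether $\frac{-1+(1+m_2)(n-1)}{-1+2m_1+2m_2}\in\BZ$; subtracting these expressions gives exactly $\frac{1+m_2}{-1+2m_1+2m_2}$, and a short arithmetic check shows that both values are realized within each parity class of $n$, giving the stated $M_1$.

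For $M_2$, the proof of Theorem \ref{prop:b} shows that in every case $d_K(n)$ has the form $d_K(n) = C_0 + \lambda\, r_{n-1}^2$, where $C_0$ is a constant depending only on $(m_1,m_2)$, the coefficient $\lambda$ equals $1-m_1-m_2$ in Case A, $-1-m_1-m_2$ in Case B-2, and $\tfrac{1}{2}-m_1-m_2$ in Case C-2, and $r_n = k_1 - c_i$ is the periodic offset with $|r_n|\le 1/2$ (or $r_n\in\{-1/2,-1\}$ when $(m_1,m_2)\in I_2$). Using $r_n^2\in[0,1/4]$ (respectively $\{1/4,1\}$ in $I_2$) gives $|d(i)-d(j)|\le |\lambda|/4$ (respectively $3m_2/4$). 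Substituting this bound, together with $M_1$ and the explicit $2b_K(n)$ from Theorem \ref{prop:b}, into the definition $M_2 = \max\{2b(i)+|b(i)-b(j)|+|d(i)-d(j)| : i\equiv j\pmod{2}\}$ and combining over a common denominator produces each of the stated closed-form expressions. For $I_3$ and $I_5$ the formulas for $\delta_K(n)$ in Cases (B-1) and (C-1) contain no $r_n$-term, so $d_K(n)$ is constant and $|d(i)-d(j)|=0$; combined with $b_K\le 0$ this immediately forces $\max\{0,M_2\}=0$.

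The main obstacle I anticipate is the $I_6$ case, where $b_K(n)$, $d_K(n)$, and the offset $r_{n-1}$ all depend on $n$. One must show that a pair $(i,j)$ with $i\equiv j\pmod{2}$ can be chosen to realize (or nearly realize) the maxima of $|b(i)-b(j)|$ and of $|d(i)-d(j)|$ simultaneously, while also selecting $i$ so that $2b(i)$ is the larger of its two possible values. This requires a careful parity and divisibility analysis of how the arithmetic condition $\frac{-1+(1+m_2)(n-1)}{-1+2m_1+2m_2}\in\BZ$ interacts with the periodicity of $r_n$, which will likely split into subcases depending on the residue of $1+m_2$ modulo $-1+2m_1+2m_2$.
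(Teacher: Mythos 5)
Your overall route is the paper's: Lemma \ref{M} is stated there with no separate argument, as an immediate consequence of the explicit formulas for $b_K(n)$ and $d_K(n)$ obtained in the proof of Theorem \ref{prop:b}, which is exactly what you propose to use. The gap is in the execution of the $M_2$ computation, concretely in the $I_4$ case. There (Case B-2) one has $d_K(n)=C_0-(1+m_1+m_2)\,r^2$ with periodic offset $|r|\le 1/2$ and constant $b_K(n)=\frac{m_1(m_2-1)}{2(1+m_1+m_2)}$, so substituting your bound $|d(i)-d(j)|\le\frac{1+m_1+m_2}{4}$ yields $\frac{m_1+m_2+1}{4}+\frac{m_1(m_2-1)}{m_1+m_2+1}$, which exceeds the value $\frac{m_1+m_2-1}{4}+\frac{m_1(m_2-1)}{m_1+m_2+1}$ asserted in the lemma (and used in the definition of $\CS_{m_1,m_2}$) by exactly $\tfrac12$. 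So the step ``substituting this bound \dots produces each of the stated closed-form expressions'' fails for $I_4$: the stated expression corresponds to extreme offsets $|r|\in\bigl\{\tfrac{1}{2(m_1+m_2+1)},\tfrac{m_1+m_2}{2(m_1+m_2+1)}\bigr\}$ rather than $0$ and $\tfrac12$, i.e.\ it requires an analysis of which values $r$ actually takes along a fixed parity class of $n$ (via the congruence $2(1+m_1+m_2)c_2\equiv 2+(1+m_2)n$) --- precisely the kind of divisibility/parity work you only anticipate doing for $I_6$. As written, your derivation produces a different formula from the one you are asked to prove, so either that finer analysis must be supplied or the discrepancy must be reconciled; it cannot be waved through.

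A second, related issue: since $M_1$ and $M_2$ are defined as maxima over pairs $i\equiv j\pmod 2$, your substitutions only give upper bounds, and the asserted equalities need a realization argument, namely a single pair in one parity class attaining the extremes of $b$ and of $r^2$ simultaneously, with $2b(i)$ taken at the larger branch. You defer exactly this for $I_6$, where moreover the condition $(1+m_2)(n-1)\equiv 1\pmod{2m_1+2m_2-1}$ has no solutions at all when $\gcd(1+m_2,\,2m_1+2m_2-1)>1$, so ``both values of $b$ are realized in each parity class'' is not automatic; and your own description of $I_2$ (where $r_n$ depends only on the parity of $n$, forcing $|d(i)-d(j)|=0$ for $i\equiv j$) shows that realization can genuinely fail. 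For the way Lemma \ref{M} is used downstream (as input to the sufficient conditions of Theorem \ref{thm-quasi-constant}) upper bounds would suffice, but as a proof of the equalities stated the proposal is incomplete, and in the $I_4$ case it does not reproduce the statement at all.
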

 
\subsection{Proof of Theorem \ref{thm:slope-cable}} 
Theorem \ref{prop:b} implies that Conjecture \ref{conj} holds true for 2-fusion knots: The fact that $b_K(n)\leq 0$ is clear by the statement
of Theorem \ref{prop:b}.  This, together with Theorem \ref{a_K}, proves Theorem \ref{thm:slope-cable} part $(1)$.
Moreover, $b_K(n) = 0$ if and only if $m_1 \in \{0,1\}$ and $m_2 \ge 1$, or $(m_1, m_2) = (-1,1)$.
As noted in \cite{GV}  we have
$K(1, m_2)= K^*(0, -m_2-1)$. On the other hand, by definition 
the knot $K(0, m_2)$ is a torus  knot. Finally  $K(-1, 1)$ is the torus knot $T(2,5)$.
Thus if $b_K(n)=0$, and $K=K(m_1, m_2)$, then $K$ is a torus knot.
By combining Theorem \ref{thm-quasi-constant}, Theorem \ref{a_K} and Lemma \ref{M} we get Theorem \ref{thm:slope-cable} part $(2)$.\qed
 
\begin{example} 
Consider the 2-fusion knot $K(m, 1)$, also known as the $(-2,3,2m+3)$-pretzel knot. It is known that $K(m,1)$ is $B$--adequate if $m \le -2$ and is $A$--adequate if $m \ge -1$. Moreover $K(m,1)$ is a torus knot if $|m| \le 1$, and $K(-2,1)$ is the twist knot $5_2$ which is an adequate knot. Hence we consider the two cases $m \ge 2$ and $m \le -3$ only. 

Note that $K(m_1,m_2)$ is the mirror image of $K(1-m_1, -1-m_2)$. In particular, $K(m,1)$ is the mirror image of $K(1-m, -2)$.

\textit{Case 1:} $m \ge 2$. From the proof of Theorem \ref{prop:b} we have
$$
d_+[J_{K(m, 1)}(n)] = \left( \frac{5}{2} + m + \frac{1}{4m} \right) n^2 + \left( \frac{1}{2m} - \frac{1}{2} \right) n - \left( 3 + \frac{3m}{4} - \frac{1}{4m} \right) - m r^2_{n-1}
$$
where $r_n$ is a periodic sequence with $|r_n| \le 1/2$.

Hence, by Corollary \ref{cables}, the $(p,q)$-cable of $K(m,1)$ satisfies Conjectures \ref{conj} and \ref{slope} if $$p-\left( 10 + 4m + \frac{1}{m} \right)q <0 \quad \text{or} \quad p-\left( 10+ 4m + \frac{1}{m} \right)q > \frac{m}{4} + \frac{1}{m} - 1.$$

\textit{Case 2:} $m \le -3$. From the proof of Theorem \ref{prop:b} we have
$$d_+[J_{K(1-m, -2)}(n)] = -\frac {2(m+2)^2} {2m+3} \, n^2 + b(n) n + (6m+17)/8 + (m + 3/2) r^2_{n-1}$$
where $r_n$ is a periodic sequence with $|r_n| \le 1/2$, and $b(n) = \begin{cases} -\frac{1}{2}   &\mbox{if } (2m+3) \nmid n, \medskip \\ 
-\frac {2m+1} {2( 2m+3)} & \mbox{if } (2m+3) \mid n. \end{cases}$

\smallskip

Hence, by Corollary \ref{cables}, the $(p,q)$-cable of $K(1-m, -2) = (K(m,1))^*$ satisfies Conjectures \ref{conj} and \ref{slope} if $$p + \left( 10 + 4m + \frac{1}{2m + 3} \right)q < 0 \quad \text{or} \quad p + \left( 10+ 4m + \frac{3}{2m+3} \right)q > - \left( \frac{m}{4} + \frac{1}{2m+3}+ \frac{11}{8} \right).$$
\end{example}


\no{ Similarly, let $K$ be a knot such that for $n \gg 0$ we have
$$
d_-[J_K(n)]=a^*n^2+b^*(n)n+d^*(n)
$$
where $a^*$ is a constant, $b^*(n)$ and $d^*(n)$ are periodic functions, and $b^*(n) \ge 0$. Let
\begin{eqnarray*}
M^*_1 &=&  \max\{|b^*(i)-b^*(j)|\},\\
M^*_2 &=& \max\{-2b^*(i) + |b^*(i) - b^*(j)| +|d^*(i)-d^*(j)|\}.
\end{eqnarray*}

Suppose $p - (4a^* + M^*_1)q > 0$, or $p - (4a^* - M^*_1)q < - M^*_2$ and $p - 4a^*q <0$. Then for $n \gg 0$ we have
$$d_-[J_{\K}(n)]=A^*n^2+B^*(n)n+D^*(n)$$
where $A^*$ is a constant, and $B^*(n), D^*(n)$ are periodic functions, and $B^*(n) \ge 0$. Moreover, if $js^*_K \subset bs_K$ then $js^*_{\K} \subset bs_{\K}$.
}

\bibliographystyle{hamsplain} \bibliography{biblio}
\end{document}